    \newcommand{\href}[2]{#2}
\newtheorem{theorem}{Theorem}[section]
\newtheorem{assumption}[theorem]{Assumption}
\newtheorem{proposition}[theorem]{Proposition}
\newtheorem{remark}[theorem]{Remark}
\numberwithin{equation}{section}  
  \newcounter{mnote}
  \let\oldmarginpar\marginpar
    \renewcommand\marginpar[1]{\-\oldmarginpar[\raggedleft\footnotesize #1]%
    {\raggedright\footnotesize #1}}
\newenvironment{enumerateX}
{\begin{list}{\arabic{enumi})}{
\usecounter{enumi}
\leftmargin 2.5em\topsep 0.5em\itemsep -0.0em\labelwidth 50.0em}}
{\end{list}}
\definecolor{myblue}{rgb}{0.2,0.2,0.7}
\definecolor{mygreen}{rgb}{0,0.6,0}
\definecolor{mycyan}{rgb}{0,0.6,0.6}
\definecolor{myred}{rgb}{0.9,0.2,0.2}
\definecolor{mymagenta}{rgb}{0.9,0.2,0.9}
\definecolor{mywhite}{rgb}{1.0,1.0,1.0}
\definecolor{myblack}{rgb}{0.0,0.0,0.0}
\newcommand{\beq}{\begin{equation}}
\newcommand{\eeq}{\end{equation}}
\newcommand{\beqa}{\begin{eqnarray}}
\newcommand{\eeqa}{\end{eqnarray}}
\renewcommand{\div}{{\operatorname{div}}}
\newcommand{\eps}{\varepsilon}
\newcommand{\PP}{{\mathbb P}}       
\newcommand{\R}{{\mathbb R}}       
\newcommand{\cL}{{\mathcal L}}
\newcommand{\cT}{{\mathcal T}}
\DeclareMathAlphabet{\mathpzc}{OT1}{pzc}{m}{it}
\newcommand{\bit}{\begin{itemize}}
\newcommand{\eit}{\end{itemize}}
\newcommand{\f}{\frac}
\newcommand{\an}{\text{ and }}
\newcommand{\tforall}{\text{ for all }}
\newcommand{\rest}{\big|}
\newcommand{\range}{\text{range\,}}
\newcommand{\argmin}{\text{argmin\,}}
\newcommand{\grad}{\nabla} 
\newcommand{\goto}{\rightarrow}
\newcommand{\norm}[1]{\ensuremath{\lVert{#1} \rVert}}
\newcommand{\nr}[1]{\norm{#1}} 
\newcommand{\pa}{\partial}
\newenvironment{qqq}{\begin{eqnarray*}\begin{split}\end{split}}{\end{eqnarray*}}
\newcommand\bqq{\begin{qqq}}
\newcommand\eqq{\end{qqq}}
\newenvironment{dsub}[2]{  \begin{array}{ccccccccccccccc}{#1} \\ {#2}}{\end{array} }
\newcommand\bml{\begin{dsub}}
\newcommand\eml{\end{dsub}}
\newenvironment{mat}{\left(\begin{array}{ccccccccccccccc}}{\end{array}\right)}
\newcommand\bcm{\begin{mat}}
\newcommand\ecm{\end{mat}}
\newenvironment{rmat}{\left(\begin{array}{rrrrrrrrrrrrr}}{\end{array}\right)}
\newcommand\brm{\begin{rmat}}
\newcommand\erm{\end{rmat}}
\definecolor{blue}{rgb}{0.2,0.2,0.7}
\definecolor{red}{rgb}{0.7,0.3,0.1}
\definecolor{cyan}{rgb}{0.2,0.5,0.6}
\begin{document}

\title[A regularized Newton-like Method]
      {A regularized Newton-like method for nonlinear PDE}

\author[S. Pollock]{Sara Pollock}
\email{snpolloc@math.tamu.edu}

\address{Department of Mathematics\\
         Texas A\&M University\\ 
         College Station, TX 77843}


\date{\today}

\keywords{Adaptive methods, 
nonlinear equations, 
nonlinear approximation,
Tikhonov regularization, 
pseudo-transient continuation,
Newton-like methods
}

\begin{abstract}
An adaptive regularization strategy for stabilizing Newton-like iterations on a coarse mesh is developed in the context of adaptive finite element methods for nonlinear PDE.
Existence, uniqueness and approximation properties are known for finite element solutions of quasilinear problems assuming the initial mesh is fine enough.  
Here, an adaptive method is started on a coarse mesh where the finite element discretization and quadrature error produce a sequence of approximate problems with indefinite and ill-conditioned Jacobians.  
The methods of Tikhonov regularization and pseudo-transient continuation are related and used
to define a regularized iteration using a positive semidefinite penalty term.  
The regularization matrix is adapted with the mesh refinements and its scaling is adapted with the iterations to find an approximate sequence of coarse mesh solutions leading to an efficient approximation of the PDE solution.  Local  q-linear convergence is shown for the error and the residual in the asymptotic regime and numerical examples of a model problem illustrate distinct phases of the solution process and support the convergence theory.
\end{abstract}

\maketitle




\section{Introduction}
This note discusses a practical regularization technique used in an adaptive finite element method (AFEM) for nonlinear and in particular stationary quasilinear problems.  
The method uses Tikhonov regularization to stabilize a sequence of coarse-mesh problems leading to an adaptively generated mesh where the approximate problem is well-posed.  It is demonstrated using a positive semidefinite penalty matrix based on the Laplacian, essentially penalizing iterates for being far from $H^2$.  
This stabilization technique is particularly relevant in the context of adaptive methods where quadrature approximations of the problem data on a coarse mesh may produce linearizations with sharp spikes and ill-conditioned and indefinite Jacobians.
The goal is to stabilize the coarse mesh problem enabling local refinement to enrich the solution space first to resolve the problem data, and then to obtain an accurate solution.  The alternative of starting on a uniformly fine mesh creates an unnecessarily large problem as many degrees of freedom are added where the data is smooth and already well approximated; moreover, it may not be \emph{a priori} known how fine of an initial mesh is necessary run the computation without stabilization.

Borrowing from the techniques used in  ill-posed problems,  Tikhonov regularization is applied as in for example~\cite{Engl1996,EKN89,SEK93,Kalten97,Jin08,Bonesky09,HoMa12} and the references therein, to  stabilize the possibly indefinite Jacobian with an appropriate penalty term in an automated way, ultimately attaining an efficient adaptive method once the problem data is resolved.  The resulting Newton-like iteration on each mesh refinement is similar to a well analyzed and motivated method found in the work of Bank and Rose in~\cite{BaRo80a} and \cite{BaRo81a} for uniformly monotone problems; and more generally as the pseudo-transient continuation ($\Psi$tc) method discussed in the context of Newton-methods in~\cite{Deuflhard11} and developed in for  example~\cite{KeLi13,FoKe05,CoKeKe02,KeKe98}. The discussion here relates the Tikhonov-regularization point of view to that of pseudo-transient continuation and extends the $\Psi$tc theory to penalty matrices with adaptively expanded nullspaces.

This work differs from the well-developed adaptive framework for solving nonlinear PDE in~\cite{ErnVor13} as this presentation addresses the coarse mesh regime dominated by quadrature error where the
nonlinear solver is designed to approach a stable state, even if to a local rather than global minimizer in the interest of determining the next mesh refinement.  In the context of adaptive finite difference methods for two-point boundary value problems, \cite{SmMa85} discusses the critical mesh-spacing necessary for the Newton iterations to remain in the domain of convergence across mesh refinements.  The presence of an analogous critical mesh-spacing is detected in the numerical examples presented here, and the analysis of this is under investigation by the author.

The remainder of the paper is organized as follows. Section~\ref{sec:TRsetup} describes the quasilinear problem and its finite element implementation.  Section~\ref{sec:regular} introduces the Newton-like iterations and relates the concepts of Tikhonov regularization and pseudo-transient continuation.  Section~\ref{sec:phases} describes three phases of the solution process and provides local convergence results for the error and residual in the final, asymptotic phase.  Section~\ref{sec:numerics} demonstrates the method and the distinct solution phases on a model problem.

\section{Problem setup}\label{sec:TRsetup}
The goal of an adaptive method is the efficient approximation to the solution of a problem, achieved by selectively increasing the degrees of freedom.  The focus of this discussion is developing a stabilized solver for an adaptive method where due to sharp gradients and near-singularities in the data, the nonlinear problem is not well resolved on one or more of the initial coarse meshes and an approximate coarse mesh solution must be found in order to run the adaptive method.

Throughout this discussion, $(u(x) , v(x) ) = \int_\Omega u(x) v(x)\, dx$, and similarly for vector-valued functions.  
Consider the quasilinear PDE in divergence form
\begin{align}\label{eqn:qLstrong}
F(u) \coloneqq -\div( \kappa(u) \grad u) - f(x,y) = 0 \text{ in } \Omega \subset \R^2, \quad u = 0 \text{ on } \pa \Omega,
\end{align}
for polygonal domain $\Omega$ and $F: X \goto Y^\ast$ with $F'(u) \in \cL(X, Y^\ast)$ for real Banach spaces $X \an Y$. 
For $f \in L_2(\Omega) \cap L_\infty(\Omega)$ and $\kappa(u)$ bounded away from zero with  $l$th derivative  $D^{(l)}(\kappa(s))$ bounded for $l = 0, 1, 2$ as in~\cite{CaRa94}, then there is a unique solution $u \in W^{1,p}(\Omega)$, with $2 < p < \infty$. This problem also  fits into the context of~\cite{Xu96} with the assumption that $\kappa(u)$ is bounded and $F'(u): H_0^1(\Omega) \goto H^{-1}(\Omega)$ is an isomorphism, in which case $u$ is an isolated solution. 
The weak form of~\eqref{eqn:qLstrong} is given by, find $u \in X$ such that
\begin{align}\label{eqn:qLweak}
B(u,v) \coloneqq ( \kappa(u) \grad u, \grad v ) - (f,v)= 0 \tforall v \in Y.
\end{align}
The discretized equation is, find $u_h \in X_h$ such that 
\begin{align}\label{eqn:qLdiscrete}
B(u_h,v)=0 \tforall v \in Y_h,
\end{align}
 where $X_h \subset X$ and $Y_h \subset Y$ are discrete finite element spaces with respect to triangulation $\cT_h$, where the family of triangulations $\{\cT_h\}_{0 < h < 1}$ is regular and quasi-uniform  in the sense of~\cite{Ciarlet}.  Computationally, rather than solving~\eqref{eqn:qLdiscrete}, one solves an approximate discrete problem as given by, find $u_h \in X_h$ such that
\begin{align}\label{eqn:qLapprox}
B_h(u_h, v) \coloneqq ( \kappa_h(u_h) \grad u_h, \grad v ) - (f_h,v)=0 \tforall v \in Y_h,
\end{align}
where $\kappa_h \an f_h$ are approximations to $\kappa \an f$ evaluated by quadrature on triangulation $\cT_h$. On a sufficiently fine mesh,~\eqref{eqn:qLapprox} is a good approximation on~\eqref{eqn:qLdiscrete}, and quadrature error may be neglected in the analysis.  
Following~\cite{CaRa94} and the references therein, under the above assumptions  the inf-sup conditions hold for problem~\eqref{eqn:qLstrong} and the discrete inf-sup conditions hold for problem~\eqref{eqn:qLdiscrete}, assuming the meshsize $h$ is small enough; see also ~\cite{Xu96} and~\cite{Schatz74} for an alternative discussion describing the small meshsize condition.  In particular, taking $X_h = Y_h = V_h $ the finite element space of linear Lagrange finite elements  $\PP_1$ over $\cT_h$ that satisfy the homogeneous Dirichlet boundary conditions,
\begin{align}\label{eqn:distreteinfsup}
\inf_{\substack{v_h \in V_h \\ |v_h|_{1,p}= 1}} \sup_{\substack{w_h \in V_h \\ |w_h|_{1,q}= 1}}
( F'(u)v_h, w_h ) \ge \beta_0 > 0
\end{align}
with $X = W_0^{1,p}(\Omega), ~Y = W_0^{1,q}(\Omega) \an F'(u):W_0^{1,p}(\Omega) \goto W^{-1,p} = Y^\ast$, for $p > 2$ and   $1/p + 1/q = 1$.  This yields existence, uniqueness and approximation properties of $u_h$ the solution to the exact discrete problem~\eqref{eqn:qLdiscrete}.
The discussion now focuses on the pre-asymptotic regime in which ~\eqref{eqn:qLapprox} is not a good approximation to ~\eqref{eqn:qLdiscrete}, the meshsize is not sufficiently small  and the discrete inf-sup condition~\eqref{eqn:distreteinfsup} is not assumed. However, the sequence of coarse-grid problems are still of use in obtaining a mesh where quadrature error becomes negligible and~\eqref{eqn:distreteinfsup} holds.
The remainder of the paper describes an asymptotically efficient algorithm to stabilize and solve the nonlinear problem~\eqref{eqn:qLapprox} with a regularized Newton-like method. 

\section{Regularized methods}\label{sec:regular}
On each refinement of the mesh, the solution to the nonlinear problem~\eqref{eqn:qLapprox} is approximated iteratively. The following notation is used in the remained of the paper: the $n$th iteration subordinate to partition $\cT_h$ is denoted $u^n_h$, or $u^n_k$ on the $k$th partition $\cT_{k}$; $u^n$ is the $n$th iteration on a fixed partition and $u_k$ is the final iteration on the $k$th mesh, taken as the approximate solution on $\cT_k$.
For the standard Newton method starting with initial guess $u^0$, iterate until convergence:
\begin{align}\label{alg:Newton}
\text{solve: } & A^n x^n = F^n \nonumber \\
\text{update: } & u^{n+1} = u^n + x^n,
\end{align}
 where $A^n$ is the Jacobian matrix associated with~\eqref{eqn:qLapprox}
  with respect to the finite element basis, found by taking the Gateaux derivative of $F(v)$ in direction $w$  given by 
$
F'(v)w = \f{d}{dt}F(v + tw)\rest_{t = 0}.
$ 
With  $V_h = X_h = Y_h$ and $\{ \phi_i\}$ the set of basis functions for $V_h$,  the Jacobian matrix  for iteration $n$ on partition $\cT_h$ is assembled by
\begin{align}\label{eqn:jacobian}
A_{ij}^n = (\kappa_h'(u^n_h) \phi_i \grad u^n_h, \grad \phi_j) + (\kappa_h(u^n_h) \grad \phi_i, \grad \phi_j),
\end{align} 
where all functions of $u^n_h$ in~\eqref{eqn:jacobian} are evaluated by quadrature.
The residual vector is assembled by
\begin{align}\label{eqn:residual}
F_{j}^n =  - \{( \kappa_h(u_h^n) \grad u_h^n, \grad \phi_j ) - (f_h,\phi_j)\}.
\end{align} 

Newton's method, algorithm~\eqref{alg:Newton} converges quadratically for $u^0$ close enough to $u^\ast$ the solution to $F(u^\ast)= 0$, and otherwise may converge slowly or fail to converge altogether.  
Following the ideas used in the solution of ill-posed problems as for example in~\cite{Engl1996,Bonesky09,HoMa12}, consider stabilizing the real $d$-dimensional matrix problem at each iteration by Tikhonov regularization.   The solution $x^n$ of   $A^n x^n = F^n$ for invertible matrix $A^n$ is the unique solution of the minimization problem
\begin{align}
x^n = \argmin_{x \in \R^d} \nr{F^n - A^n x}^2.
\end{align}
More generally, $w_\alpha^n$ is the unique minimizer of the Tikhonov functional 
\begin{align}\label{eqn:TikMin}
w_\alpha^n = \argmin_{w \in \R^d}G_\alpha(w), \quad ~\text{ with}~
 G_\alpha(w) \coloneqq \{ \nr{F^n - A^n w}^2 + \alpha \nr{Rw}^2 \},
\end{align}
for stabilization matrix $R$ and parameter $\alpha$.  The necessary and sufficient optimality condition on $w_\alpha^n$ requires $G_\alpha'(w_\alpha^n)$ = 0, resulting in the regularized normal equations
\begin{align}\label{eqn:NormalIt}
 \left(\alpha_n R^TR + A^{n^T}A^n \right)w_\alpha ^n = A^{n^T}F^n.
\end{align}
The solution $w_\alpha^n$ to~\eqref{eqn:NormalIt} is the Tikhonov approximation to $x^n$, the solution to \eqref{alg:Newton}. As $A^n$ and $F^n$ are constructed by~\eqref{eqn:jacobian} and~\eqref{eqn:residual} with respect to the approximate problem~\eqref{eqn:qLapprox} rather than~\eqref{eqn:qLdiscrete}, both can be viewed as noisy data.
As the noise due to quadrature error pollutes both $F^n$ and $A^n$, 
the problem departs from the analysis of linear ill-posed problems where only the right-hand side data is considered noisy and an \emph{a priori} bound of $\nr{F^n  - F^n_\delta} \le \delta$, with noise level characterized by $\delta$ is used in the convergence analysis.
Here, the $\alpha_n$ and $R_h$ should be chosen so that $\alpha_n R_h^TR_h\goto 0$ as both $\kappa_h \goto \kappa$ and $f_h \goto f$.  

\begin{remark}
Also unlike the case of ill-posed problems, the equation $A^n u^n = F^n$  assumes $F^n \in \range(A^n)$. The normal equations are used to create stability when the Jacobian matrix $A^n$ is indefinite, ensuring the shift in the spectrum of the left-hand side operator is away rather than towards zero.
For a coercive elliptic problem with variable diffusion coefficients, the discrete approximation to the bilinear form  even with quadrature error can also be shown coercive~\cite{BS08}.  Tikhonov regularization of a linear problem in that case can be observed to stabilize the initial sequence of coarse-mesh solutions, but an eventual solution on an adaptive mesh can be achieved with or without regularization.
\end{remark}

For the problem considered here, let $R$ the Laplacian stiffness matrix assembled by
\begin{align}\label{eqn:Lpenalty}
R_{ij} = (\grad \phi_i, \grad \phi_j).
\end{align}
There is significant literature on the choice of regularization parameters for ill-posed problems, see for instance~\cite{EKN89,SEK93,Engl1996,Kalten97,Jin08,Bonesky09,HoMa12} and the references therein; and an analysis of parameter choice for this discrete well-posed if noisy problem is the topic of current investigation by the author. In the scope of this article, examples of regularization parameters $\alpha_n \an R_h$ are demonstrated to effectively stabilize the model problem and allow accurate approximation of~\eqref{eqn:qLweak}.  
With $A^n$ and $F^n$ assembled by~\eqref{eqn:jacobian} and~\eqref{eqn:residual} obtain the regularized Newton-like iteration
\begin{align}\label{alg:NTRNewton}
\text{solve: } & \left(\alpha_n R^TR + A^{n^T}A^n \right)w_\alpha ^n = A^{n^T}F^n \nonumber \\
\text{update: } & u^{n+1} = u^n + w_\alpha^n.
\end{align}
Tikhonov regularization theory is based on the normal equations formulation~\eqref{alg:NTRNewton}. In the special case where the Jacobians $A^n$ are known to be positive definite, the following secondary formulation which exploits the sparsity of the Jacobian may be used.  
\begin{align}\label{alg:TRNewton}
\text{solve: } & (\alpha_n R + A^n )w_\alpha^n = F^n \nonumber \\
\text{update: } & u^{n+1} = u^n + w_\alpha^n.
\end{align}
While more efficient due to maintaining the sparse structure of the original problem, method~\eqref{alg:TRNewton} may actually shift a nonpositive spectrum close to zero and is generally the less robust of the two methods. 
Formulation~\eqref{alg:NTRNewton} creates stability for indefinite Jacobians but with a loss of sparse structure: it stabilizes the computation on the coarsest grids but it is preferable to switch to the algorithm~\eqref{alg:TRNewton} once the discretization yields positive Jacobians.  Determining efficiently computable criteria for choosing which formulation to use is a topic of further investigation by the author, and is presently  performed based on the size of the initial residual on each mesh refinement. If stability is the priority, the adaptive algorithm can be run with~\eqref{alg:NTRNewton} alone.

\begin{remark}
A note is made in~\cite{BaRo80a} regarding the relation of the ``s" method and a method of the form~\eqref{alg:TRNewton} if the penalty term $R$ is symmetric positive definite.  If $R$ is invertible, then \eqref{alg:TRNewton} fits into the framework of pseudo-transient continuation as described in for example~\cite{Deuflhard11,CoKeKe02,FoKe05,KeKe98}; and the ``s" method of Bank and Rose falls into this category.  
For the solution $u^*$ of the nonlinear problem $g(u)=0$, the iteration~\eqref{alg:TRNewton} is given by
$(\alpha_n R + g'(x^n))w^n = g(x^n)$, where $R$ is a positive semidefinite linear functional with adjoint $R
^\ast$. The ``s" method is given by
\begin{align}\label{alg:smethod}
\text{solve: }  \left( \f{1}{s_n} I + g'(x^n) \right)w^n = g(x^n), \quad u^{n+1} = u^n + w^n,
\end{align}
and can be thought of as a homotopy or continuation method to determine a path from initial guess $u^0$ to steady state $u^\ast$  by integrating the ODE
\begin{align}\label{mot:ode_s}
\f{du}{dt} + g(u(t)) = 0, \quad u(0) = u^0.
\end{align}
Setting $s_n = t_{n+1} - t_{n}$, and letting $u^n$ approximate $u(t_n)$, ~\eqref{mot:ode_s} is discretized by
\begin{align}\label{mot:ode_Ds}
\f{d}{dt}u^{n+1}  + g(u^{n+1}) = 0.
\end{align}
Applying a backward Euler approximation to $d (u^{n+1})/dt$ and linearizing $g(u^{n+1})$ about $g(u^n)$, obtain the iteration~\eqref{alg:smethod}.   The present method~\eqref{alg:TRNewton} may be similarly described by considering the ODE
\begin{align}\label{mot:ode_R}
\f{d}{dt} (Ru(t)) + g(u(t)) = 0, \quad u(0) = u^0,
\end{align}
in place of~\eqref{mot:ode_s} for linear operator $R$. In this sense, $R$ may be though of as regularizing the path from $u^0$ to $u^\ast$ with $\alpha_n = 1/s_n$ large enough, \emph{i.e., } the step-size $s_n$ chosen small enough for stability.  Notably, there is no requirement for $R$ to be invertible and in some cases it is preferable to choose $R$ so the exact solution $u^\ast$ lies in or at least close to the nullspace of $R$. Method~\eqref{alg:NTRNewton} based on the normal equations fits into the $\Psi$tc context by applying the same discretization to the ODE
\begin{align}\label{mot:ode_RTR}
\f{d}{dt} (R^\ast Ru(t)) + g'(u(t))^\ast g(u(t)) = 0, \quad u(0) = u^0
\end{align}
with $g'(u^{n+1})^\ast$ the formal adjoint of $g'(u^{n+1})$ approximated by $g'(u^n)^\ast$.
In light of this observation, both~\eqref{alg:NTRNewton}  and~\eqref{alg:TRNewton} can be seen as Tikhonov regularization with the strength of the penalty term controlled by $\alpha$ or as pseudo-transient continuation with timestep $s = 1/\alpha$. 
\end{remark}

Ideally, the exact solution $u$ should be in the nullspace of the regularization matrix; another view is the regularization should penalize against undesirable properties of the iterates such as sharp spikes or high curvature determining a smoothed path for the homotopy method to follow.  Choice of an appropriate matrix then depends on \emph{a priori} knowledge of the solution, for instance its regularity.  The Laplacian for example penalizes against spikes in curvature.  Development of more precise and localized penalty terms is under investigation by the author; the examples in Section~\ref{sec:numerics} use a sequence of singular penalty matrices found by applying a cutoff to the Laplacian penalty matrix,  regularizing only degrees of freedom where the error spikes as determined by an \emph{a posteriori} error indicator.  This method of adaptively expanding the nullspace of the penalty matrix with each mesh refinement has been observed to reduce the time spent in Newton-like iterations by approximately 85\% as compared to the full-rank regularization technique when the algorithm is run from a coarse mesh until asymptotic error reduction is achieved.

\section{Solution Process}\label{sec:phases}
In the spirit of~\cite{CoKeKe02,KeKe98} the adaptive method is discussed in three  phases. Similarly to~\cite{SmMa85} and the recent~\cite{ErnVor13}, iterations are are stopped and the mesh is refined some number of times during each phase, characterized as follows.

\textit{Initial phase: } Quadrature error dominates and the iterations defined by~\eqref{alg:NTRNewton} use a large penalty term or small timestep.  Iterations are stopped when decrease of the residual slows and the mesh is refined using an \emph{a posteriori} error indicator.  If the residual fails to decrease, the coarsest elements of the mesh are refined and the solution process is restarted with the initial guess for the next refinement reset to zero. At present, there are no convergence results for this phase; at worst, the coarse mesh elements are refined sufficiently-many times to bring the algorithm to the next phase.

\textit{Pre-asymptotic phase: } Quadrature error is still large but iterates defined by~\eqref{alg:NTRNewton} converge to a smooth local minimizer of the Tikhonov functional $G_\alpha(w)$. On each mesh refinement $\cT_k$ the penalty term starts large and decreases guided by the norm of the residual, \textit{i.e.,} $\alpha_n = \gamma_n \nr{F^n}$.  Iterations are stopped when decrease in residual slows or the residual drops below a preset tolerance;  the mesh is refined using an \emph{a posteriori} error indicator, and the Newton-like iterations on each new refinement are started with the  interpolation of the solution from the previous refinement onto the current mesh.

\textit{Asymptotic error reduction: } The problem is well approximated and the mesh may be assumed sufficiently fine so existence and uniqueness results for the nonlinear problem and its finite element approximation apply. In this phase the method may be switched from~\eqref{alg:NTRNewton} to~\eqref{alg:TRNewton} as the problem is bigger in terms of total dof and it is advantageous to exploit the sparsity of the Jacobian.  The regularization term is small and may be phased out altogether. Iterations are stopped when  the residual decreases below a preset tolerance;  the mesh is refined using an \emph{a posteriori} error indicator, and the Newton-like iterations on each new refinement are started with the interpolation of the solution from the previous refinement onto the current mesh.

In the initial phase, iterates at best approach local minimizers.  The mesh is coarse and the problem is relatively small but the overall efficiency of the algorithm could be improved by determining early stopping criteria when iterates fail to converge. This is a topic of current investigation by the author.

\subsection{Local Convergence}\label{subsec:localconvergence}
Local convergence describes the behavior of the method in the asymptotic error reduction phase, where the initial guess on a given refinement is sufficiently close to a minimizer.  The relevance of this analysis is the regularization term does not prevent the iterates from converging to the correct solution.  An analysis addressing the pre-asymptotic regime will be discussed in future work, for instance by considering the sequence of approximate problems as snapshots of a stiff PDE. 

For clarity of presentation, the algorithm is analyzed for finding a zero of function $g(x)$.  Relating the notation to the assembled matrices~\eqref{eqn:jacobian} and~\eqref{eqn:residual} used in computation, $A^n$ discretizes $g'(u^n)$ and $F^n$ discretizes $g(u^n)$.
Let $x^\ast$ the solution to $g(x) = 0$ and denote the open ball $B(x^\ast, \eps) = \{ x \, \rest \, \nr{x - x^\ast} < \eps\}$.  Denote the error $e^n = x^n - x^\ast$.

\begin{assumption}\label{assume1} (\textit{c.f.} Assumptions 2.2-2.3 of~\cite{CoKeKe02}).  There exist $\beta, \omega_L \an \eps_1 > 0$ so that for positive semidefinite $R$, and for all 
$0 < \alpha_n < \alpha_M$, then for all $x \in B(x^\ast, \eps_1)$:
\begin{enumerateX}
\item $\nr{g'(x) - g'(y) } \le \omega_L \nr{x - y}$ for all $y \in B(x^\ast, \eps_1)$.
\item $ \alpha_n R + g'(x)$ is invertible.
\item $\nr{(\alpha_n R + g'(x))^{-1}} \le M_I$.
\item $ \nr{(\alpha_n R + g'(x))^{-1} (\alpha_n R)} \le \f{\alpha_n/\beta}{1 + \alpha_n/\beta}$.
\end{enumerateX}
\end{assumption}

Assumption~\ref{assume1} (4) characterizes the stability of the approximate Jacobian, \textit{c.f.,} Assumption 2.3 in~\cite{CoKeKe02} and Assumption 2.1.3 in~\cite{KeKe98}.  Unlike the standard Newton method, it is not assumed here that $g'(x)$ is invertible or has a bounded inverse, and the role of the regularization term $\alpha_n R$ is to bound the inverse of the approximate Jacobian away from zero in a well-conditioned way in the sense of Assumption~\ref{assume1} (4).  With these assumptions which describe the asymptotic phase of the algorithm, local convergence of the error is shown, and under similar assumptions local convergence of the residual is  then demonstrated.  It is relevant to show this for algorithm~\eqref{alg:TRNewton} as the problem data has stabilized enough to take advantage of the sparse approximate Jacobian in this phase.
 
 \begin{theorem}\label{theorem:localerror}
 Let $\alpha_n < \alpha_M$ and let  Assumption~\ref{assume1} hold.  Then there is $\eps_E$ so that for $x^n \in B(x^\ast, \eps_E)$, the iteration~\eqref{iter:TRNewton}  converges $q$-linearly. The regularized iteration is given by
 \begin{align}\label{iter:TRNewton} 
x^{n+1} - x^n = - (\alpha_n R + g'(x^n))^{-1} g(x^n).
 \end{align}
 \end{theorem}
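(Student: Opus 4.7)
The plan is to analyze the error recursion $e^{n+1} = x^{n+1} - x^\ast$ directly and produce a bound of the form $\|e^{n+1}\| \leq \rho \|e^n\|$ with a constant $\rho < 1$, by exploiting the four parts of Assumption~\ref{assume1} together with a standard integral remainder for $g$. Writing $J_n \coloneqq \alpha_n R + g'(x^n)$, the iteration \eqref{iter:TRNewton} reads $e^{n+1} = e^n - J_n^{-1} g(x^n)$, and since $g(x^\ast) = 0$, I can rearrange this as
\begin{equation*}
e^{n+1} = J_n^{-1}\bigl[\alpha_n R\, e^n + (g'(x^n) e^n - g(x^n))\bigr].
\end{equation*}
This splits the error cleanly into a \emph{regularization defect} $\alpha_n R e^n$ and a \emph{linearization defect} $g'(x^n) e^n - g(x^n)$, which is exactly the form needed to apply Assumption~\ref{assume1}(4) and Assumption~\ref{assume1}(1), respectively.

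For the linearization defect, the plan is to use the fundamental theorem of calculus to write $g(x^n) = \int_0^1 g'(x^\ast + t e^n) e^n \, dt$, so that
\begin{equation*}
g'(x^n) e^n - g(x^n) = \int_0^1 \bigl(g'(x^n) - g'(x^\ast + t e^n)\bigr) e^n\, dt.
\end{equation*}
Assumption~\ref{assume1}(1) applied under the hypothesis $x^n \in B(x^\ast, \eps_1)$ then yields the quadratic bound $\|g'(x^n) e^n - g(x^n)\| \leq (\omega_L/2) \|e^n\|^2$. Combining this with Assumption~\ref{assume1}(3) for $\|J_n^{-1}\|$ and Assumption~\ref{assume1}(4) for $\|J_n^{-1}(\alpha_n R)\|$ gives
\begin{equation*}
\|e^{n+1}\| \leq \frac{\alpha_n/\beta}{1 + \alpha_n/\beta} \|e^n\| + \frac{M_I\, \omega_L}{2} \|e^n\|^2.
\end{equation*}

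Since $\alpha_n < \alpha_M$, the first coefficient is uniformly bounded by $\rho_0 \coloneqq (\alpha_M/\beta)/(1 + \alpha_M/\beta) < 1$. I will then choose $\eps_E$ small enough that $\eps_E \leq \eps_1$ and $\rho \coloneqq \rho_0 + (M_I \omega_L/2)\eps_E < 1$, which can always be arranged since $\rho_0 < 1$. A short induction using $x^n \in B(x^\ast, \eps_E) \subset B(x^\ast, \eps_1)$ closes the argument: the bound gives $\|e^{n+1}\| \leq \rho \|e^n\| \leq \rho \eps_E < \eps_E$, so the iterates remain in $B(x^\ast, \eps_E)$ and $\|e^{n+1}\| \leq \rho \|e^n\|$ holds for all $n$, establishing q-linear convergence.

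The main subtlety is not the calculation itself but ensuring that a uniform rate $\rho < 1$ is achievable: since $g'(x^\ast)$ is not assumed invertible, the contraction cannot come from $\|J_n^{-1} g'(x^n)\|$ being small, and the usual Newton argument breaks. The key is that Assumption~\ref{assume1}(4) is stronger than mere boundedness of $J_n^{-1}(\alpha_n R)$ by $1$: it provides the explicit bound $(\alpha_n/\beta)/(1 + \alpha_n/\beta)$, which is strictly less than one whenever $\alpha_n < \infty$, and hence uniformly less than one on $\alpha_n \in (0, \alpha_M)$. This is precisely what allows the regularization term not to spoil local convergence, which is the substantive point of the theorem.
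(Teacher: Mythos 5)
Your proposal is correct and follows essentially the same route as the paper: the identical splitting of $e^{n+1}$ into the linearization defect $g'(x^n)e^n - g(x^n)$ (bounded quadratically via the integral form of the mean value theorem and Assumption~\ref{assume1}(1),(3)) and the regularization defect handled by Assumption~\ref{assume1}(4). The only cosmetic difference is the choice of $\eps_E$: the paper tunes it to obtain the specific contraction factor $(\alpha_n/\beta)/(1/2+\alpha_n/\beta)$, whereas you take any uniform $\rho<1$; your explicit induction keeping the iterates in $B(x^\ast,\eps_E)$ is a point the paper leaves implicit.
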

 
 \begin{proof}
 \begin{align}\label{eqn:EC001}
 e^{n+1} &= e^n - (\alpha_n R + g'(x^n))^{-1} g(x^n) \nonumber \\
 & = (\alpha_n R + g'(x^n))^{-1}( g'(x^n)e^n - g(x^n)) +
 (\alpha_n R + g'(x^n))^{-1}( \alpha_n R) e^n.
 \end{align}
 
 The first term in~\eqref{eqn:EC001} is bounded by Assumption~\ref{assume1} and the integral mean value theorem together with the Lipschitz assumption on $g'(x)$.
 \begin{align}
 \nr{g'(x^n) e^n - g(x^n)} & = \nr{ g'(x^n) e^n - \int_0^1 g'(x^\ast + t e^n) e^n \, dt} \nonumber \\
 & \le \omega_L \int_0^1 \nr{e^n}^2 (1-t) \, dt  = \f {\omega_L}2 \nr{e^n}^2,
 \end{align}
 yielding
 \begin{align}\label{eqn:EC002}
\nr{  (\alpha_n R + g'(x^n))^{-1}( g'(x^n)e^n - g(x^n)) } \le M_I \f{\omega_L}{2} \nr{e^n}^2.
 \end{align}
 The second term in~\eqref{eqn:EC001} is bounded by Assumption~\ref{assume1} (4).  Then 
 \begin{align}\label{eqn:EC003}
 \nr{e^{n+1}} & \le \nr{e^n} \left( M_I \f{\omega_L}{2} \nr{e^n} + \f{1}{1+\beta/\alpha_n} \right).
 \end{align}
 To establish $q$-linear convergence, set 
 \begin{align}\label{eqn:EC003b}
 \eps_2 = \f{1 }{(M_I  \omega_L) } \f{\alpha_M/\beta}{(1+ \alpha_M/\beta)(1/2 +  \alpha_M/\beta)},
 \quad \an \eps_E = \min\{\eps_1, \eps_2\}.
 \end{align}
 Then, $M_I \omega_L \nr{e^n}/2 + (1 + \beta/\alpha_n)^{-1} < ( 1 + \beta/(2 \alpha_n))^{-1}$ for 
 $x^n \in B(x^\ast, \eps)$, and from~\eqref{eqn:EC003}
 \begin{align}\label{eqn:EC004}
 \nr{x^{n+1} - x^\ast} \le \nr{x^n - x^\ast}  \left( \f{\alpha_n/\beta}{ 1/2 + \alpha_n/\beta}\right).
 \end{align}
 Rearranging~\eqref{eqn:EC003} yields
 \[
  \nr{e^{n+1}}  \le C_A  \nr{e^n}
  \left(  \nr{e^n} +  \left( \f{\alpha_n}{\beta} \right)   \left( \f{1}{ C_A} \right) \right), \quad \text{with}~ C_A =  \f{M_I  \omega_L}{2} ,
 \]
yielding asymptotically quadratic convergence assuming the sequence $\alpha_n \goto 0$.
 \end{proof}
 
 \begin{remark}\label{rem:basinsize}
 Maximizing~\eqref{eqn:EC003b} for $\eps = \eps(y(\alpha))$ with $y = \alpha/\beta$, obtain
 \[
 \max_{y > 0} \eps(y) = (6 - 4 \sqrt 2) \f{1}{M_I \omega_L} \approx  \f{1}{3M_I \omega_L},
 \]
 with the maximum attained at $\alpha = \beta/ \sqrt 2$.  The constant $\beta$ is \textit{a priori} unavailable computationally, however this shows making $\alpha$ arbitrarily large does not enlarge the domain of convergence of the algorithm.  While it is not practical to test at every refinement, this also yields the condition based on Assumption~\ref{assume1} (4), suggesting
  $ \nr{(\alpha_n R + g'(x))^{-1} (\alpha_n R)} \le \f{1}{1 + \sqrt 2}$ as a guideline for q-linear convergence of method~\eqref{alg:TRNewton}.
 \end{remark}
 
A similar analysis shows the local $q$-linear and asymptotically quadratic convergence of the residual.  The following bound similar to Assumption~\ref{assume1} (4) is required.
 
 \begin{assumption}\label{assume2} There exist $ \beta_S \an \eps_3 > 0$ with $\eps_3 \le \eps_E$ so that for positive semidefinite $R$ and for all 
$0 < \alpha_n <\alpha_M$, then for all $x \in B(x^\ast, \eps_3)$:
\[
 \nr{ (\alpha_n R)(\alpha_n R + g'(x))^{-1}} \le \f{\alpha_n/\beta_S}{1 + \alpha_n/\beta_S}.
 \]
\end{assumption}

\begin{theorem}\label{theorem:localresi}
Let the hypotheses of Theorem~\ref{theorem:localerror} and Assumption~\ref{assume2} hold.  
 Then there is $\eps_S>0$ so that for $x^n \in B(x^\ast, \eps_S)$, the sequence of residuals defined by iteration~\eqref{iter:TRNewton}  converges $q$-linearly to zero. 
 \end{theorem}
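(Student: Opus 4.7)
The plan is to mirror the proof of Theorem~\ref{theorem:localerror} but track the residual $g(x^n)$ instead of the error $e^n$, using Assumption~\ref{assume2} to place the regularization term on the opposite side of the inverse. Set $s^n = x^{n+1} - x^n = -(\alpha_n R + g'(x^n))^{-1} g(x^n)$ and start from the identity
\[
g(x^{n+1}) = g(x^n) + g'(x^n)\, s^n + \int_0^1 \bigl[g'(x^n + t s^n) - g'(x^n)\bigr] s^n\, dt,
\]
which requires $x^n, x^{n+1} \in B(x^\ast, \eps_1)$; I would secure this by taking $\eps_S \le \min\{\eps_E, \eps_3\}$ so that Theorem~\ref{theorem:localerror} guarantees the iterates remain in the ball where both Assumption~\ref{assume1} and Assumption~\ref{assume2} apply.

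Next I would use the algebraic identity $g'(x^n)(\alpha_n R + g'(x^n))^{-1} = I - (\alpha_n R)(\alpha_n R + g'(x^n))^{-1}$, which upon substituting $s^n$ yields
\[
g(x^n) + g'(x^n)\, s^n = (\alpha_n R)(\alpha_n R + g'(x^n))^{-1} g(x^n).
\]
The whole point of Assumption~\ref{assume2} is to control exactly this product: it gives the contraction factor $\frac{\alpha_n/\beta_S}{1+\alpha_n/\beta_S} = \frac{1}{1+\beta_S/\alpha_n}$ on $\|g(x^n)\|$. For the remainder I would combine the Lipschitz hypothesis (Assumption~\ref{assume1}(1)) with $\int_0^1 t\, dt = 1/2$ to obtain $\|\int_0^1 [g'(x^n + ts^n) - g'(x^n)]s^n\, dt\| \le (\omega_L/2)\|s^n\|^2$, and then bound $\|s^n\| \le M_I \|g(x^n)\|$ via Assumption~\ref{assume1}(3). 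Assembling these pieces gives
\[
\|g(x^{n+1})\| \le \left( \frac{1}{1 + \beta_S/\alpha_n} + \frac{\omega_L M_I^2}{2} \|g(x^n)\| \right) \|g(x^n)\|.
\]

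To finish, I would shrink $\eps_S$ (using continuity of $g$ at $x^\ast$, where $g(x^\ast) = 0$, to control $\|g(x^n)\|$ in terms of $\|e^n\|$) so that the quadratic-in-residual term is dominated by a fixed fraction of $(1+\beta_S/\alpha_M)^{-1}$, exactly as in the passage from~\eqref{eqn:EC003} to~\eqref{eqn:EC004}. This yields a contraction constant strictly less than one uniformly in $\alpha_n \in (0,\alpha_M]$, establishing q-linear convergence, and as $\alpha_n \to 0$ only the quadratic term survives, producing the asymptotically quadratic rate. The main obstacle is purely bookkeeping: verifying that $\eps_S$ can be chosen small enough independently of $n$ to keep both the contraction factor below one \emph{and} the iterate inside the ball where Assumption~\ref{assume2} is valid; once that threshold is fixed, the estimate propagates inductively because the residual then decreases at each step.
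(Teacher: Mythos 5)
Your proposal is correct and follows essentially the same route as the paper's proof: the same integral mean value theorem expansion, the same identity $g(x^n) + g'(x^n)\Delta x^n = \alpha_n R(\alpha_n R + g'(x^n))^{-1}g(x^n)$ controlled by Assumption~\ref{assume2}, the Lipschitz bound on the remainder, and $\|\Delta x^n\| \le M_I\|g(x^n)\|$ from Assumption~\ref{assume1}(3), leading to the identical final estimate and the same shrinking of the neighborhood to obtain the contraction. The only cosmetic difference is that you control $\|g(x^n)\|$ via continuity at $x^\ast$ whereas the paper bounds $\|\Delta x^n\| \le 2\eps_E$ directly; both are equivalent bookkeeping.
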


 \begin{proof}
Let $\Delta x^n = x^{n+1} - x^n$.  
By the integral mean value theorem and iteration~\eqref{iter:TRNewton}
\begin{align}\label{eqn:RC001}
g(x^{n+1}) &= g(x^n) + \int_0^1 g'(x^n + t \Delta x^n) \Delta x^n \, dt  \nonumber \\
& = g(x^n)+ g'(x^n) \Delta x^n + \int_0^1\left[  g'(x^n + t \Delta x^n) - g'(x^n)\right] \Delta x^n\, dt
\nonumber  \\
& = \alpha_n R(\alpha_nR + g'(x^n))^{-1} g(x^n)
+ \int_0^1\left[  g'(x^n + t \Delta x^n) - g'(x^n)\right] \Delta x^n\, dt. 
\end{align}
 The first term of~\eqref{eqn:RC001} is bounded by Assumption~\ref{assume2} and the second using the Lipschitz condition, Assumption~\ref{assume1} (1). Then
 \begin{align}\label{eqn:RC002}
 \nr{g(x^{n+1}) } & \le \f{1}{1 + \beta_S/\alpha_n} \nr{g(x^n)} + \f{\omega_L}{2} \nr{\Delta x^n}^2
 \nonumber \\
 & \le  \nr{g(x^n)} \left( \f{1}{1 + \beta_S/\alpha_n} + \f{\omega_L M_I}{2} \nr{\Delta x^n} \right).
 \end{align}
 By Theorem~\ref{theorem:localerror} for $x^n \in B(x^\ast,\eps_E)$ the next iterate $x^{n+1} \in B(x^\ast,\eps_E)$ so that $\nr{\Delta x^n}  \le 2 \eps_E$.  
 To establish $q$-linear convergence of the residual, choose $\eps_4$ small enough so that by~\eqref{eqn:RC002} and the same reasoning as in Theorem~\ref{theorem:localerror}
 \begin{align}\label{eqn:RC003}
 \nr{g(x^{n+1})} \le \nr{ g(x^n)}  \left( \f{\alpha_n/\beta_S}{ 1/2 + \alpha_n/\beta_S}\right).
 \end{align}
From~\eqref{eqn:RC002} and iteration~\eqref{iter:TRNewton}
 \begin{align}\label{eqn:RC004}
  \nr{g(x^{n+1}) } & \le  \nr{g(x^n)} \left( \f{1}{1 + \beta_S/\alpha_n} + \f{\omega_L M_I^2}{2} \nr{g(x^n)} \right)
  \nonumber \\
  & \le  C_B \nr{g(x^n)} \left(  \nr{g(x^n)} +  \left( \f{\alpha_n}{\beta_S} \right)  \f{1}{C_B}  \right), \quad
  \text{with}~ C_B = \f{\omega_L M_I^2}{2}, 
 \end{align}
 yielding asymptotically quadratic convergence to zero of the residual assuming the sequence $\alpha_n \goto 0$. 
 \end{proof}
 
\begin{proposition} With the domain of convergence as given by Theorem~\ref{theorem:localerror} and Theorem~\ref{theorem:localresi}, the sequences given by
\[
\alpha_n = \nr{g(x^n)}, ~\an~ \alpha_n = \f{\nr{g(x^n)}^2}{\nr{g(x^{n-1})}},
\]
both lead to asymptotically quadratic convergence of the residual, as predicted by~\eqref{eqn:RC004}.
\end{proposition}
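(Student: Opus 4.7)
The plan is to verify the proposition by direct substitution of each proposed $\alpha_n$ into the master inequality~\eqref{eqn:RC004} established in Theorem~\ref{theorem:localresi}, and checking that the resulting recurrence is asymptotically quadratic. Since both choices drive $\alpha_n \to 0$ whenever $\nr{g(x^n)} \to 0$, the precondition $\alpha_n \to 0$ needed in the second inequality of~\eqref{eqn:RC004} is automatic, and smallness of $\nr{g(x^0)}$ in the asymptotic regime guarantees $\alpha_n < \alpha_M$ throughout.

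For the first choice $\alpha_n = \nr{g(x^n)}$, substitution into~\eqref{eqn:RC004} gives
\[
 \nr{g(x^{n+1})} \le C_B \nr{g(x^n)}\left( \nr{g(x^n)} + \f{\nr{g(x^n)}}{\beta_S C_B}\right) = \left(C_B + \f{1}{\beta_S}\right) \nr{g(x^n)}^2,
\]
which is exactly quadratic convergence of the residual. I would then note that this choice also fits into the $q$-linear framework of Theorem~\ref{theorem:localresi} whenever $\nr{g(x^n)}/\beta_S < 1/2 + \nr{g(x^n)}/\beta_S$, i.e., unconditionally, so the iterates stay in $B(x^\ast, \eps_S)$ under the smallness hypothesis on $\nr{g(x^0)}$.

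For the second choice $\alpha_n = \nr{g(x^n)}^2/\nr{g(x^{n-1})}$, I would first invoke Theorem~\ref{theorem:localresi} to ensure that the ratio $q_n \coloneqq \nr{g(x^n)}/\nr{g(x^{n-1})}$ is bounded by a constant $q < 1$ in the asymptotic regime; in fact $q_n \to 0$ since the first-choice argument already shows the sequence is contracting. Substituting $\alpha_n = q_n \nr{g(x^n)}$ into~\eqref{eqn:RC004} yields
\[
 \nr{g(x^{n+1})} \le C_B \nr{g(x^n)}\left( \nr{g(x^n)} + \f{q_n \nr{g(x^n)}}{\beta_S C_B}\right) = \left(C_B + \f{q_n}{\beta_S}\right) \nr{g(x^n)}^2,
\]
and since $q_n \to 0$, the effective rate constant decays, giving asymptotically quadratic convergence.

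The main obstacle, and the only nontrivial point, will be justifying that the second choice is well-defined and admissible from the start: it requires $x^{n-1}$ to have already produced a nonzero residual and, more importantly, requires the initial ratio $q_0$ to be controlled so that $\alpha_0 < \alpha_M$ and $x^0 \in B(x^\ast, \eps_S)$. Once this bootstrapping step is made — which follows from applying Theorem~\ref{theorem:localresi} to a single preliminary step using the first choice (or simply any admissible $\alpha_0$) — the recursive bound above closes the argument, and both sequences are asymptotically quadratic in exactly the sense claimed by~\eqref{eqn:RC004}.
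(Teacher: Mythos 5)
Your proposal is correct and follows exactly the route the paper intends: the proposition is stated as an immediate consequence of~\eqref{eqn:RC004} (the paper supplies no separate proof), and your direct substitution of each choice of $\alpha_n$ into that bound, using the $q$-linear contraction from Theorem~\ref{theorem:localresi} to control the ratio $\nr{g(x^n)}/\nr{g(x^{n-1})}$ for the second choice, is the natural completion of that argument. The only nitpick is that "contracting" gives the ratio bounded below $1$ rather than tending to $0$ directly; that the ratio tends to $0$ follows from~\eqref{eqn:RC003} once $\alpha_n \to 0$, but either way the coefficient in your quadratic bound stays bounded, which is all that is needed.
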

The choice of parameter $\alpha_n = \gamma_n\nr{g(x^n)}$ is justified in~\cite{BaRo80a} and related choices are  discussed in~\cite{Deuflhard11,KeKe98} and the references therein.
The parameter choice used in the present results $\alpha_n = {\nr{g(x^n)}^2}/{\nr{g(x^{n-1})}}$ and the relevant convergence theory for method~\eqref{alg:NTRNewton} will be further discussed in future investigations by the author.

\section{Numerical Examples}\label{sec:numerics}
Consider the quasilinear stationary diffusion equation
\[
F(u) \coloneqq-\div( \kappa(u) \grad u) - f =0 ~\text{ in }~ \Omega = [0,1] \times [0,1], \an u = 0 \text{ on } \pa \Omega,
\]
with nonlinear diffusion coefficient
\[
\kappa(s) = 1 + \f 1{\left( (\epsilon + (s - a)^2\right)},  \quad ~\text{with}~ \eps = 10^{-3}, \quad a = 0.5,
\]
and load $f(x,y)$  chosen so the exact solution $u(x,y) = \sin \pi x \sin \pi y$.
Existence and uniqueness of solutions is discussed in Section~\ref{sec:TRsetup} following~\cite{CaRa94} and~\cite{Xu96}, assuming the mesh is fine enough.

 The problem is discretized with finite element space $V_k$ consisting of linear Lagrange finite elements  $\PP_1$ over partition $\cT_k$ that satisfy the homogeneous Dirichlet boundary conditions.
On an initial mesh of  288 elements or less the Jacobian based on certain iterates $u^n_k$ is observed to be indefinite, justifying the use of~\eqref{alg:NTRNewton}.

The algorithm is implemented using the finite element library FETK~\cite{Holst2001a} and a direct solver is used on each linear system.
The mesh is refined with respect to local element indicators.  In the numerical examples that follow, standard residual-based indicators are used both for mesh refinement and to determine which degrees of freedom to stabilize.  

The local \emph{a posteriori} residual-based indicator for element $T \in \cT_k$ with $h_T$ the element diameter is given by
\[
\eta_T^2(v) = \eta_{\cT_k}^2(v,T) \coloneqq h_T^2 \nr{ F(v)}_{L_2(T)}^2 +  h_T \nr{ J_T(v)  }_{L_2(\pa T)}^2,
\]
$J_T(v) \coloneqq \llbracket [\kappa(v) \grad v  \cdot n \rrbracket_{\pa T}$,
with jump
$\llbracket \phi \rrbracket_{\pa T} \coloneqq {\lim_{t \goto 0} \phi(x + t n) - \phi(x - tn)}$, 
 where  $n$ is the appropriate outward normal defined on $\pa T$.

On each mesh partition $\cT_k$ the penalty matrix $R = R_{k}$ is adapted from the Laplacian stiffness matrix as in~\eqref{eqn:Lpenalty},  modified by a cutoff against degrees of freedom  $v_j$ for which the local error indicator $\eta_T = \eta_T(u_k^0)$ satisfies
\[
\eta_T \le \sqrt{\text{median}_{T \in \cT_k}(\eta_T)},
\]
for each element $T$ which contains $v_j$ as a vertex. An investigation of conditions for selective application of  regularization is currently being investigated by the author.  This heuristic cutoff is used because of the observed separation in the magnitude of the local error indicators.  It is also observed with this cutoff that none of the degrees of freedom get selected for additional regularization within a few mesh refinements past  the point where the error decays at its asymptotic rate.

The regularization parameter $\alpha_n$ that scales the penalty matrix $R_k$ is chosen as follows: Set $\gamma_0 = 1$. For $n \ge 1$,
\begin{align}\label{eqn:alpha_prac}
\alpha_{n} = \gamma_n \nr{F^{n}}, \quad ~\text{with}~ \gamma_{n}  = \f {\nr{F^{n}}}{\nr{F^{n-1}}}.
\end{align}
To reduce rapid fluctuation of $\gamma_n$, correct to ensure $\gamma_{n-1}/2 \le \gamma_n \le 1$ in the case that   $\nr{F^{n}}< \nr{F^{n-1}}$ and $\gamma_n \le 2 \gamma_{n-1}$ if $\nr{F^{n}}> \nr{F^{n-1}}$.

On the first coarse mesh the Newton-like method is started with initial guess $u_0^0 = 0$.  On subsequent refinements, $u_k^0$ is chosen by interpolating $u_{k-1}$ onto $V_k$.  
If the criteria for exiting the Newton-like iterations are not met on partition $\cT_k$ after a maximum allowed number of iterations, the coarsest elements are refined,  $ u^0_{k+1}$ is reset to zero, and the cutoff on the regularization is not used on $R_{k+1}$.  

The criterium used  for switching from method~\eqref{alg:NTRNewton} to method~\eqref{alg:TRNewton} is the initial residual $\nr{F^0} < 50$, tested on each mesh refinement.  
The stopping criteria for the iterations are either
\begin{align}
\nr{F^{n+1}}& \le \text{tol}, \quad \text { or } \label{cond:stop1} \\
\nr{F^{n+1}} &< \nr{F^0} \an \nr{F^{n+1}} < \nr{F^{n}} \an \gamma_{n}> \gamma_{n-1} \label{cond:stop2}.
\end{align}
Condition~\eqref{cond:stop2} effectively stops the iterations when the decrease in the residual slows and allows the mesh to refine.  The tolerance for~\eqref{cond:stop1} in this example is set at tol=$10^{-7}$.

Data is shown for the adaptive algorithm starting from an initial mesh of 36 elements.  Snapshots of the solution and the adaptive mesh are shown for adaptive levels 10, 16 and 20, each representing a phase of the solution process with respect to Section~\ref{sec:phases}.  Tables~\eqref{tab:iter10N}-\eqref{tab:iter20S}  summarize data for the iterations on the same refinements. The data shown are the norm of the residual $\nr{F^{n+1}}$ assembled by~\eqref{eqn:residual} with $u^{n+1}_k$, the scaled difference in iterates $E_n = \nr {(u^{n+1}_k - u^{n}_{k})}/\nr{u^n_k}$, and the factors $\gamma_n$ as determined by~\eqref{eqn:alpha_prac}, and $J_n =  \nr{(\alpha_n R_k + A^n)^{-1} (\alpha_n R_k)}$ from Assumption~\ref{assume1} (4) and Remark~\ref{rem:basinsize}, with $A^n$ assembled by~\eqref{eqn:jacobian} with $u_k^n$. Theory suggests~\eqref{alg:TRNewton} should converge $q$-linearly for $J_n \le 1/(1 + \sqrt 2)$. The iterations after 10 refinements with 320 elements and 16 refinements with 728 elements are run with~\eqref{alg:NTRNewton}.The iterations on the 20th adaptive mesh with 1500 elements are run with formulation~\eqref{alg:TRNewton}.

\begin{table}
	\centering
	\small
\begin{tabular}{c||c|c|c|c}
ITER & $\nr{F^{n+1}}$  & $E_n $ & $\gamma_n$  & $ J_n$\\
\hline
0& 	 95.8162& 			 &	                   &		\\
1& 	 30.5175 &	 0.0336786& 	  1              	&1.16517	\\
2& 	 27.3112 &	 0.0693449& 	 0.5        	&1.1353	\\
3& 	 26.1526 &	 0.0546612& 	 0.894936 &1.14712	 \\
\end{tabular}
\caption{Newton-like iterations using~\eqref{alg:NTRNewton} on a mesh of 320 elements with regularization applied to  136 of 185 dof. }
\label{tab:iter10N}
\end{table}

\begin{table}
	\centering
	\small
\begin{tabular}{c||c|c|c|c}
ITER & $\nr{F^{n+1}}$  & $E_n $ & $\gamma_n$  & $ J_n$\\
\hline
0& 	 101.284& 			&					&					\\
1& 	 254.537& 	 0.167595& 	  	 1			&5.61919		\\
2& 	 64.6942& 	 0.0321575& 	  	 2			&2.09372		\\
3& 	 30.8987& 	 0.0344072& 	  	 1			&1.90928		\\
4& 	 9.56493& 	 0.0393813& 	  	 0.5			&1.35121		\\
5& 	 3.25445& 	 0.0699149& 	  	 0.309558		&0.831689	\\
6& 	 1.89826& 	 0.0635441& 	  	 0.340248		&0.525463	\\
\end{tabular}
\caption{Newton-like iterations using~\eqref{alg:NTRNewton} on a mesh of 728 elements with regularization applied to 309 of 389 dof. }
\label{tab:iter16N}
\end{table}

\begin{table}
	\centering
	\small
\begin{tabular}{c||c|c|c|c}
ITER & $\nr{F^{n+1}}$  & $E_n $ & $\gamma_n$  & $ J_n$\\
\hline
0& 	 17.7935   & 				& 				&				\\
1& 	 1.71252    &	 0.00742287 	 & 	 1			&1.72215			\\
2& 	 0.518451 & 	 0.0218088 	 & 	 0.5			&0.42968			\\
3& 	 0.0738831& 	 0.0095354 	 & 	 0.302742		&0.119834		\\
4& 	 0.00111901& 	 0.0012099 	 & 	 0.151371		&0.0100722		\\
5& 	 1.67738e-07& 	 1.42661e-05 	 & 	 0.0756854	&7.76983e-05		\\
6& 	 2.04483e-12 &	 1.71896e-09 	 & 	 0.0378427	&5.82459e-09		\\
\end{tabular}
\caption{Newton-like iterations using~\eqref{alg:TRNewton} on a mesh of 1500 elements with regularization applied to 642 of 775 dof. }
\label{tab:iter20S}
\end{table}

Tables~\eqref{tab:iter10N} and~\eqref{tab:iter16N} both illustrate the stopping criterium based on the increase in factors $\gamma_n$ effectively stops the iterations when the decrease in the residual slows.  As the theory predicts an increase in the convergence rate as $u^n \goto u$, the the iterates in this case are approaching a stable configuration other than the solution.  Table~\eqref{tab:iter20S} shows data based on the first iteration using the standard method~\eqref{alg:TRNewton} and illustrates the $q$-linear convergence once $J_n < 1/(1 + \sqrt 2)$.  In practice, the factors $J_n$ are not computed, although they could be periodically monitored. 

Table~\eqref{tab:adaptivesum} summarizes the data for the adaptive algorithm starting on the mesh of 36 elements.  The solutions is reset and coarse elements refined if the iterations fail to meet the stopping criteria after 20 iterations. The norm of the final residual $\nr{F^{k}}$ assembled by~\eqref{eqn:residual} from $u_{k}$ and the final regularization factor $\alpha_k$ on each refinement $\cT_k$ are shown, as well as the ratio of regularized to total degrees of freedom. Up to level 19 the iterations are run using~\eqref{alg:NTRNewton} and the rest using~\eqref{alg:TRNewton}.  On the $25$th refinement no dof are selected for refinement and that remains the case for the following iterations, meaning the standard Newton iteration~\eqref{alg:Newton} is run from that point forward. Both $\alpha_k$ and $\nr{F^k}$ show the three distinct solution phases. The solution is only reset during the initial phase and converges on every level in the asymptotic phase.  In between in the pre-asymptotic phase the iterates approach a stable state other than the solution.  

It is observed that starting on an initial mesh of 36, 72, 144 or 288 elements all adaptive refinements reset at the same stages so the mesh and solutions agree in all cases.  Starting the algorithm with an initial mesh of 18 elements the mesh is asymptotically similar in that the coarsest elements are the same size as the other cases when the algorithm runs long enough, however there are minor differences in the element subdivisions on the smaller elements.  This predicts a minimum meshsize necessary to resolve the problem data, \textit{c.f.,}~\cite{SmMa85} for the case of finite differences, and will be investigated in this context by the author in future work. 
\begin{table}
	\centering
	\small
\begin{tabular}{c||c|c|c|c}
Level & iterations & $\nr{F^k}$  & $\alpha_k $ & Reg. dof \\ 
\hline
1 & 7		&1276.48		& 1272.67		&25/30			\\
2 & 6		&1288.69		&1191.57		& 27/31			\\
3 & 20	&1157.12		&1120.33		&30/32			\\
4 &3		&848.942		&846.74		&49/49			\\
5 &20	&858.131		&775.344		&43/55 			\\
6 &10	&482.029		&374.066		&85/85 			\\
7 &20	&756.814		&961.174		& 68/89 			\\
\hline 
8 &12	&38.7518		&23.4659		&169/169			\\
9 &4		&32.3166		&35.1739		&144/177			\\
10 &3	&26.1526		&15.2588		&136/185 			\\
11 &4	&65.5013		&48.1103		&160/205 			\\
12 &3	&63.1959		&49.4333		&176/221			\\
13 &3	&48.9646		&27.8695		&217/266 			\\
14 &4	&30.1089		&15.0512		&247/306 			\\
15 &4	&11.7586		&7.34768		&291/346			\\
16 &6	&1.89826		&1.10732		& 309/389			\\
17 & 4	&1.62103		&0.720371	&361/424			\\
18 &6	&0.000984649	&0.0250257	&439/506 			\\
\hline
19 &7	& 6.4825e-11	& 8.55164e-07	&554/632			\\
20 &6	&2.04483e-12	&6.34765e-09	&642/775 			\\
21 & 4	&2.54008e-10 	&7.64046e-06	&922/1080 		\\
22 & 4	& 2.02901e-10	&2.99153e-06	&1152/1473 		\\
23 &4	& 4.67019e-09	&1.57458e-05	&1253/2097		\\
24 & 4	&5.45452e-11	&1.60574e-06	&362/3061 		\\
25 & 3	&5.75245e-12	&8.32757e-08	& 0/4250			
\end{tabular}
\caption{Summary of data showing the adaptive algorithm through three solution phases.}
\label{tab:adaptivesum}
\end{table}

Figure~\eqref{fig:sol} shows the finite element solution after 10, 16 and 20 adaptive refinements starting with an initial mesh of 36 elements. These snapshots illustrate the three phases of the solution process discussed in Section~\ref{sec:phases} where the initial phase is characterized by nonsmooth solutions; however, the adaptive mesh as shown in Figure~\eqref{fig:mesh} still enriches the degrees of freedom where the data has the sharpest gradients.  The
pre-asymptotic phase is characterized by smooth solutions which fail to converge to the solution of the exact problem; in the asymptotic phase the finite element solutions converge to the exact sinusoidal solution.
\begin{figure}
\includegraphics[trim=0pt 0pt 0pt 20pt, clip=true,width=0.3\textwidth]{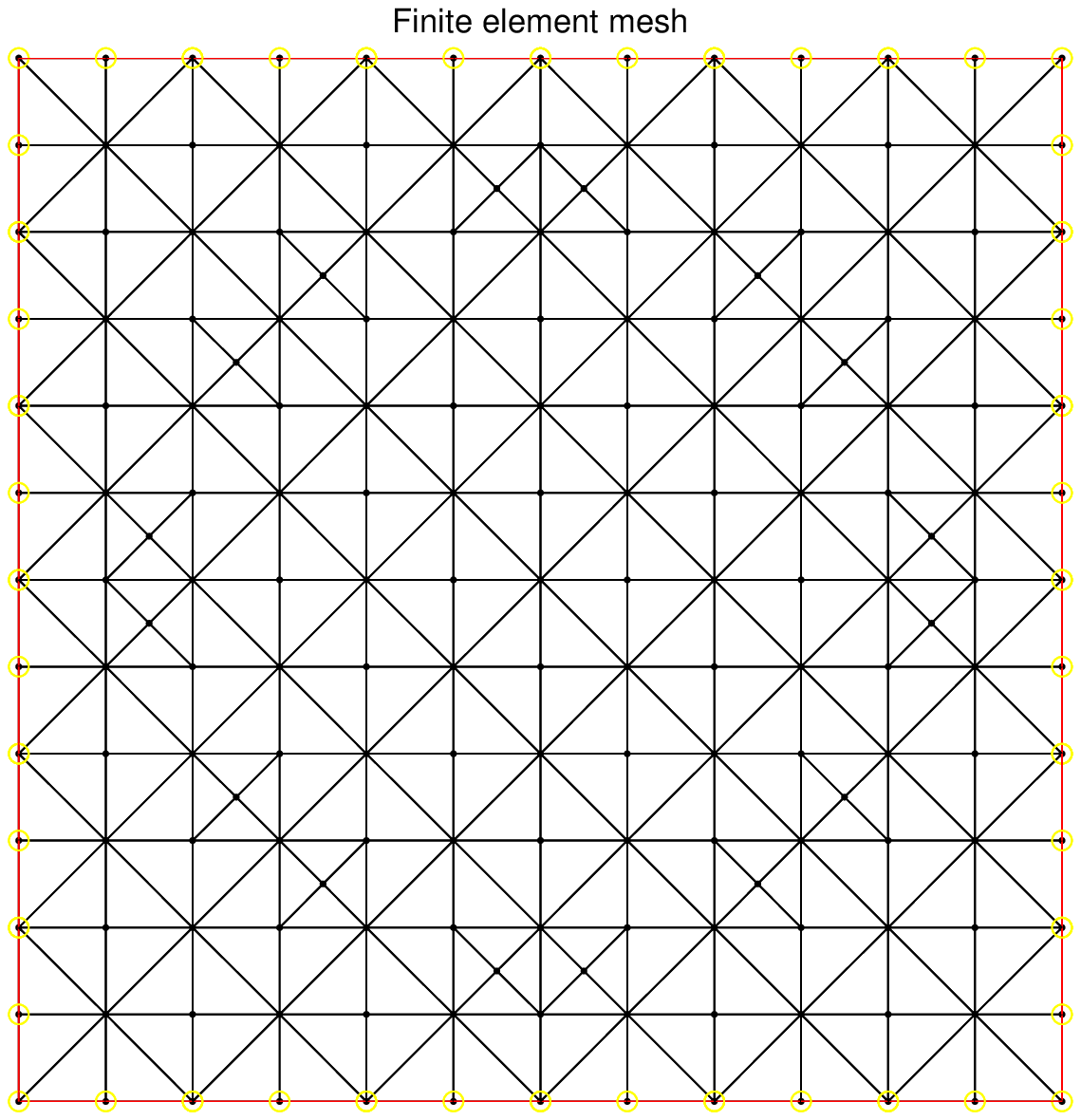}~\hfil~
\includegraphics[trim=0pt 0pt 0pt 20pt, clip=true,width=0.3\textwidth]{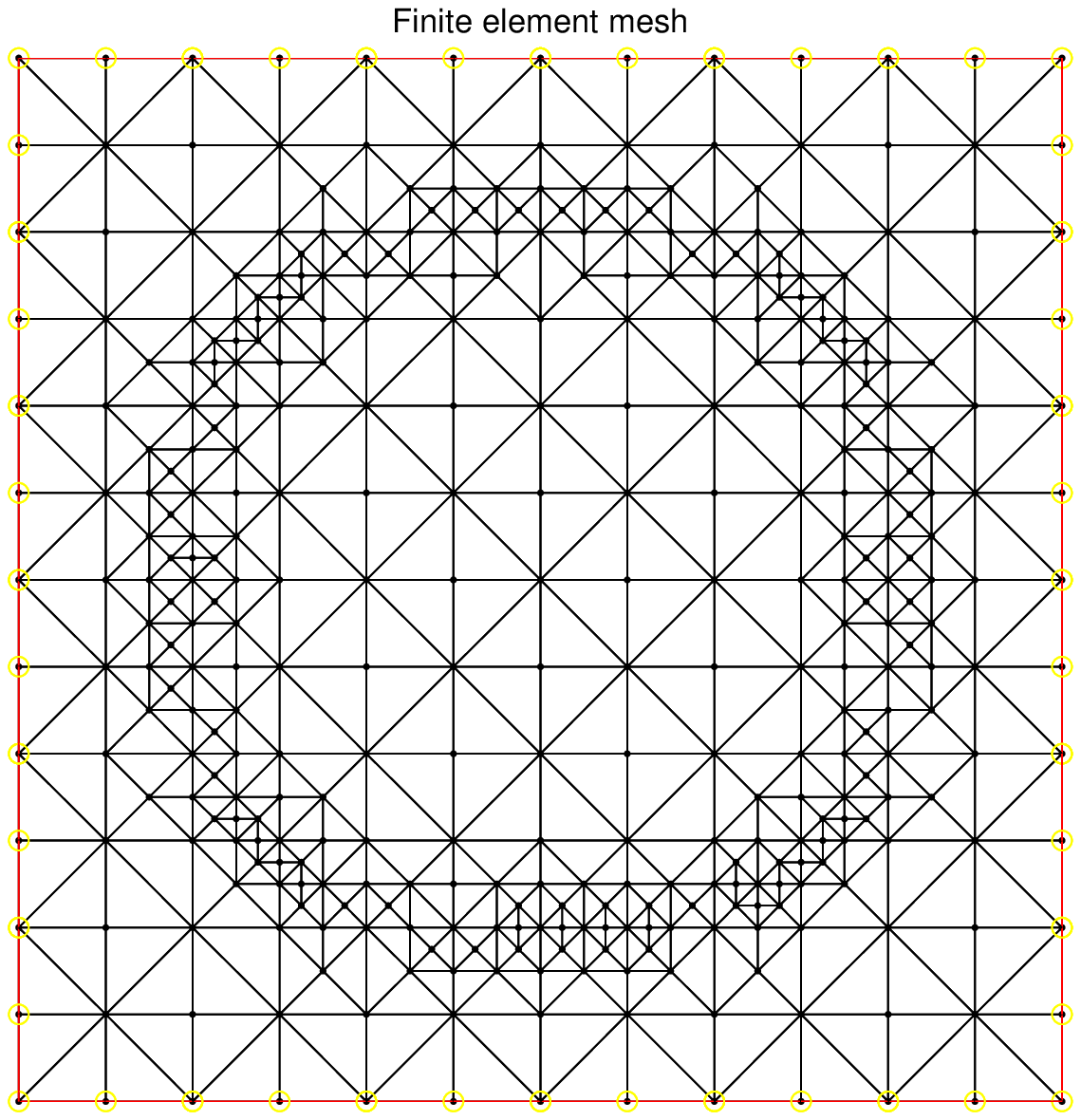}~\hfil~
\includegraphics[trim=0pt 0pt 0pt 20pt, clip=true,width=0.3\textwidth]{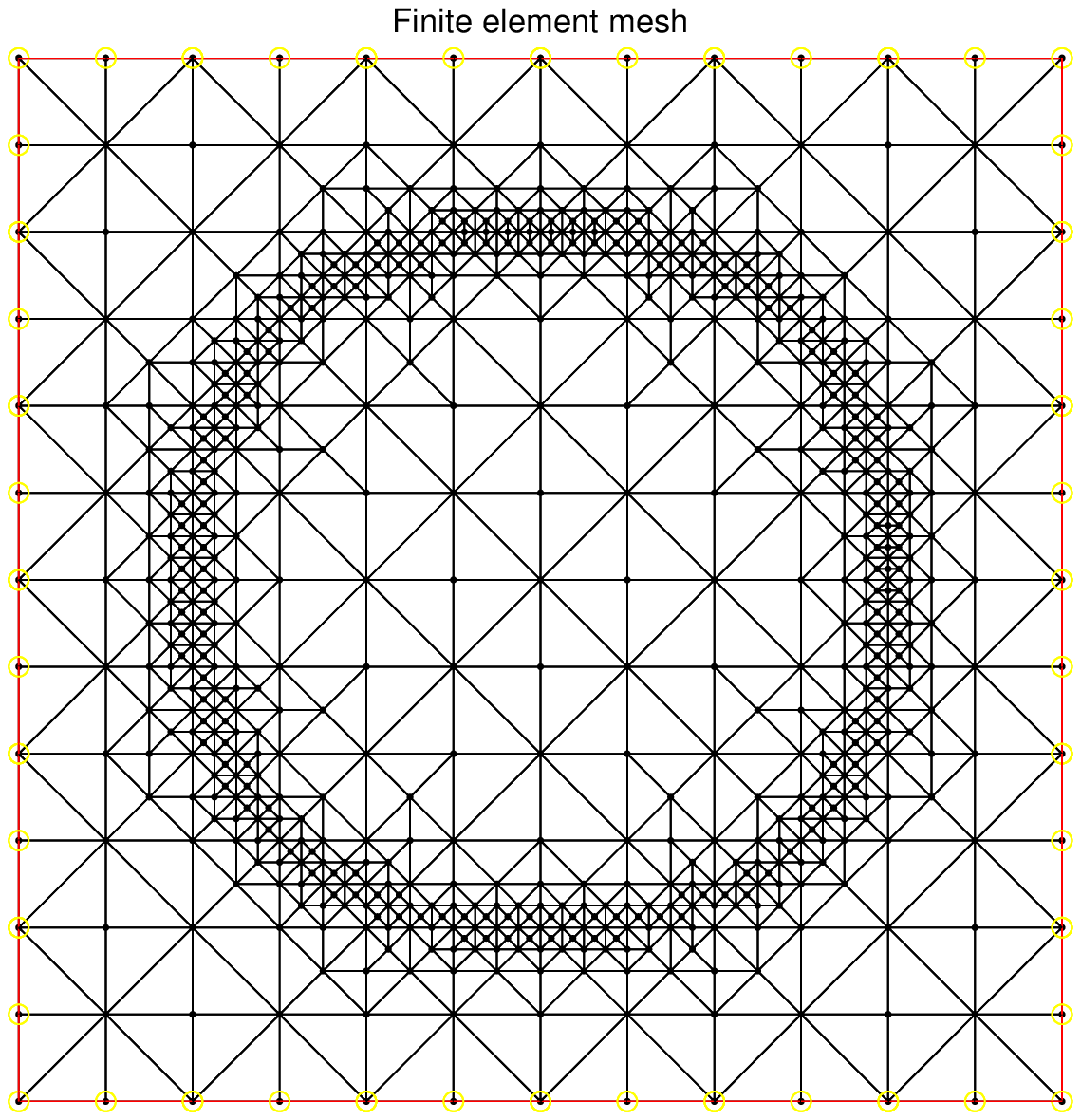}~
\caption{Left: mesh after 10, 16 and 20 adaptive refinements from an initial mesh with 32 elements.}
\label{fig:mesh}
\end{figure}

\begin{figure}
\includegraphics[trim=0pt 0pt 0pt 20pt, clip=true,height=0.35\textwidth]{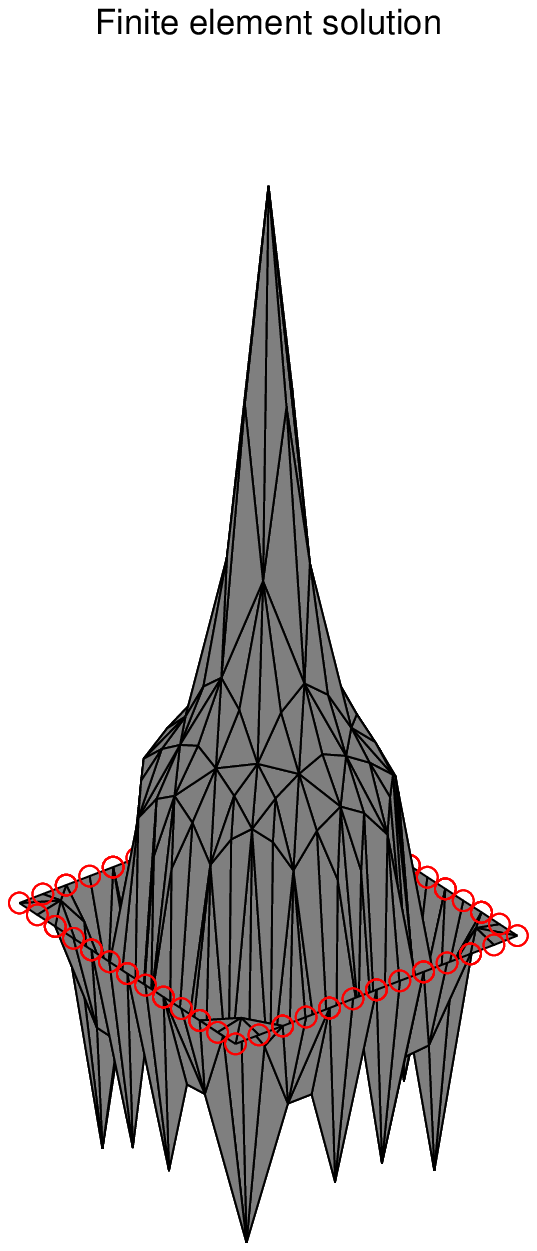}~\hfil~
\includegraphics[trim=0pt 0pt 0pt 20pt, clip=true,height=0.3\textwidth]{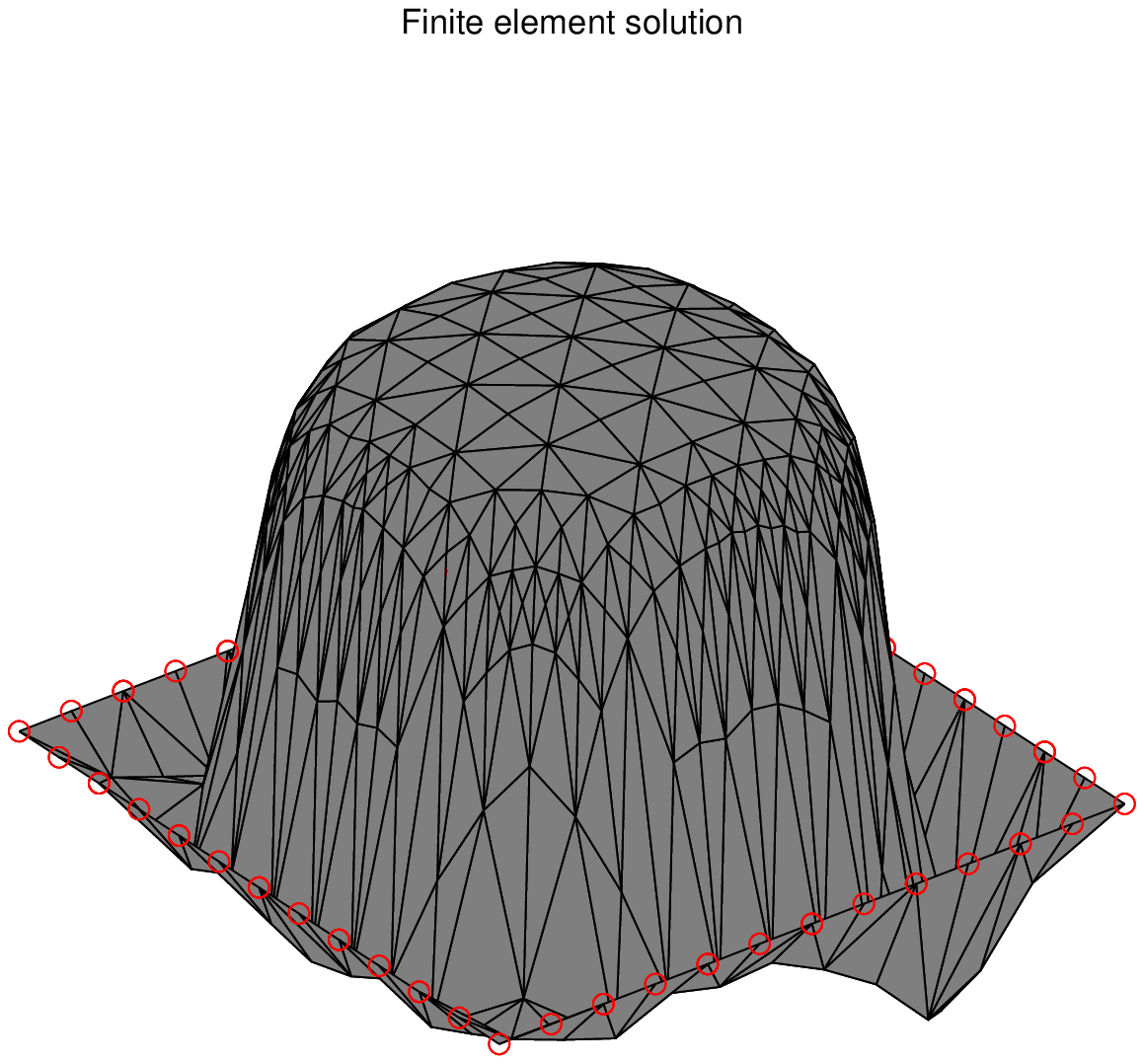}~\hfil~
\includegraphics[trim=0pt 0pt 0pt 20pt, clip=true,height=0.3\textwidth]{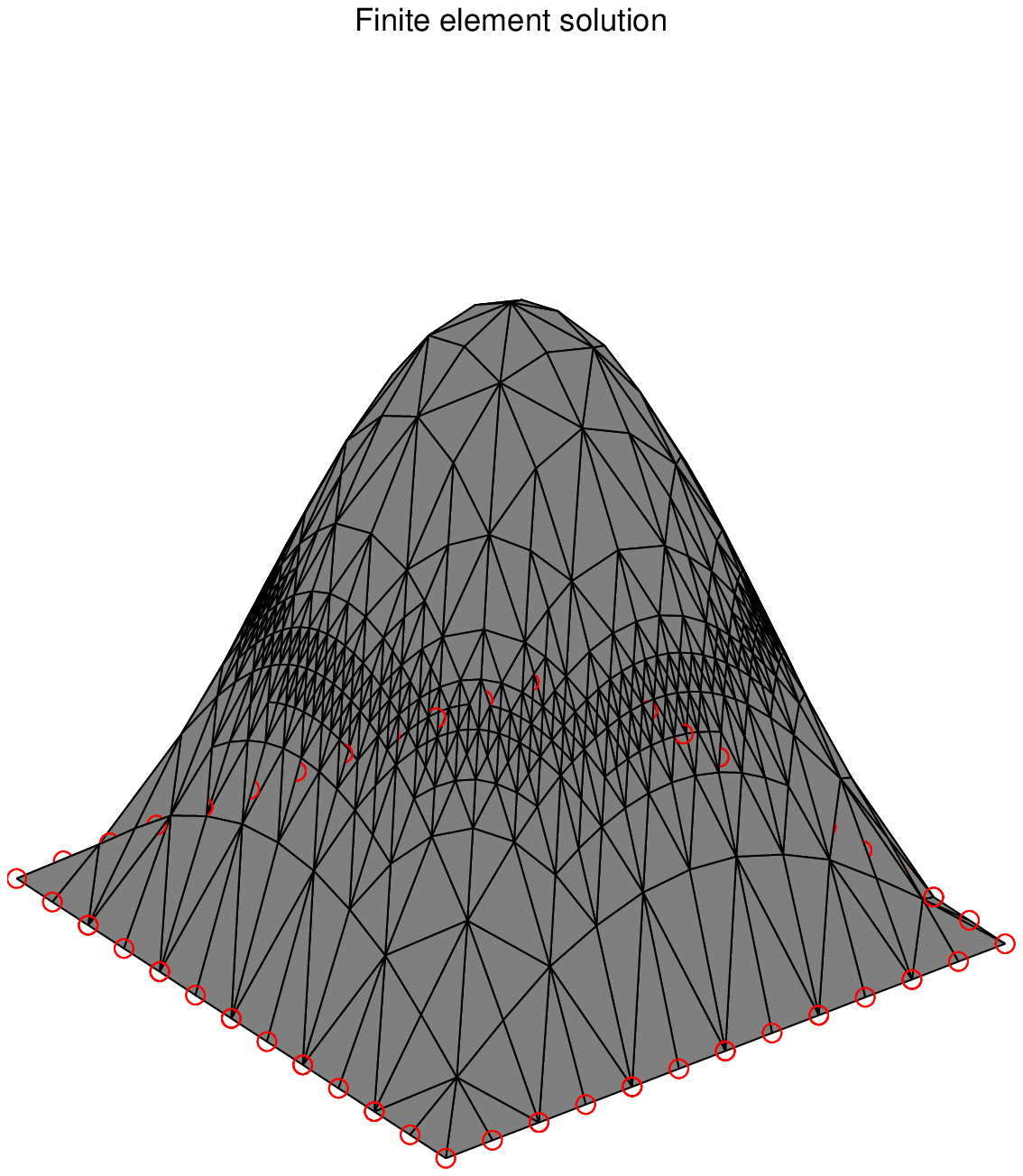}~
\caption{Left: solution after 10, 16 and 20 adaptive refinements from an initial mesh with 32 elements.}
\label{fig:sol}
\end{figure}

Figure~\eqref{fig:h1error}  shows the $H^1$ error (below) and error estimator (above) corresponding to the iterations in Table~\eqref{tab:adaptivesum} starting after the last reset and continuing to the 28th adaptive refinement with 24040 elements. The three solution phases are again clear from  the $H^1$ error and the distinction between the last two phases is clear in the estimator. Once the problem data is resolved, the error and estimator decrease asymptotically like $n^{-1/2}$, with $n$ the number of mesh elements.
\begin{figure}
\includegraphics[width=0.5\textwidth]{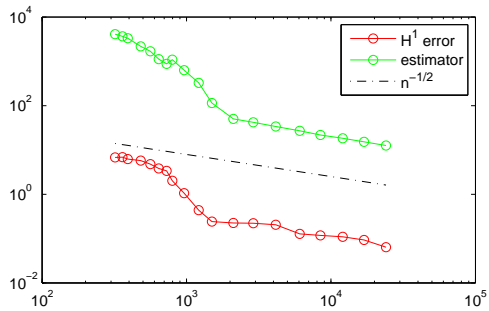}
\caption{$H^1$ error and error estimator corresponding to Table~\eqref{tab:adaptivesum} compared with $n^{-1/2}$  with $n$ the number of mesh elements.  
}
\label{fig:h1error}
\end{figure}

\section{Conclusion}\label{sec:conclusion}
This discussion presents a regularization  strategy  for Newton-like iterations designed to run an adaptive algorithm for a quasilinear problem starting on a coarse mesh. 
Tikhonov regularization used in the solution of ill-posed problems with noisy data is related to pseudo-transient continuation methods used in the computation of steady-state solutions to nonlinear differential equations. The resulting method is demonstrated to produce a convergent sequence of approximations on a model problem where an ill-conditioned indefinite Jacobian makes standard or damped Newton iterations infeasible.  
 The algorithm is discussed with respect to, and numerically demonstrated to display three distinct phases in the solution process, and $q$-linear convergence of the  regularized iterative method is demonstrated in the final asymptotic phase.  Future investigations by the author will address problem-dependent meshsize requirements necessary to transition from the initial to the pre-asymptotic phases, where the existence of such conditions are suggested by the numerics.  Analysis and control of the regularization parameters in the pre-asymptotic phase, as well as criteria for using the regularized formulation based on the normal equations for problems of this form and the investigation of stronger nonlinearities  will also be addressed in future work. 
\section{Acknowledgments}
   \label{sec:ack}
The author would like to thank William Rundell for many conversations
providing insight into this work.

\bibliographystyle{abbrv}
\bibliography{refsTRN}



\end{document}